\documentclass[11pt]{article}

\usepackage[utf8]{inputenc}
\usepackage[T1]{fontenc}
\usepackage{lmodern}

\usepackage{amsmath, amssymb, amsthm}
\usepackage{mathtools}

\usepackage{graphicx}
\usepackage{hyperref}

\usepackage{enumitem}
\usepackage{microtype}

\graphicspath{{figures/}}
\hypersetup{hidelinks}

\theoremstyle{plain}
\newtheorem{theorem}{Theorem}[section]
\newtheorem{lemma}[theorem]{Lemma}

\newtheorem{corollary}[theorem]{Corollary}
\newtheorem{conjecture}[theorem]{Conjecture}

\theoremstyle{definition}
\newtheorem{definition}[theorem]{Definition}
\newtheorem{question}[theorem]{Question}
\newtheorem{problem}[theorem]{Problem}

\theoremstyle{remark}
\newtheorem{remark}[theorem]{Remark}

\title{Order 14 is the largest order for which every 4-total coloring of every cubic graph is equitable}

\author{
Matheus Adauto\thanks{Instituto de Computação, Universidade Federal do Rio de Janeiro, Rio de Janeiro, Brazil. Corresponding author: \texttt{adauto@ic.ufrj.br}}
\and
Celina de Figueiredo\thanks{Programa de Engenharia de Sistemas e Computação, Universidade Federal do Rio de Janeiro, Rio de Janeiro, Brazil. \texttt{celina@cos.ufrj.br}}
\and
Diana Sasaki\thanks{Instituto de Matemática e Estatística, Universidade do Estado do Rio de Janeiro, Rio de Janeiro, Brazil. \texttt{diana.sasaki@ime.uerj.br}}
\and
Rafael Schneider\thanks{Programa de Engenharia de Sistemas e Computação, Universidade Federal do Rio de Janeiro, Rio de Janeiro, Brazil. \texttt{rschneider@cos.ufrj.br}}
}
\date{}

\begin{document}

\maketitle

\begin{abstract}
A total coloring of a graph is an assignment of colors to its vertices
and edges so that adjacent or incident elements receive distinct
colors, and it is equitable when the cardinalities of any two color
classes differ by at most one. Stemock conjectured that every $4$-total
coloring of a cubic graph of order less than $20$ is equitable. In this
paper, we disprove this conjecture: the circular ladder $L_{12}$ admits
a non-equitable $4$-total coloring and, moreover, no smaller
counterexample exists: order $4$ is vacuous, and every $4$-total
coloring of a cubic graph of order $6$, $8$, or $10$ is equitable. We
also prove that the same property holds at order $14$. Our proofs
rely on a decomposition lemma, which states that, in any $4$-total
coloring of a cubic graph $G$, each color class consists of an
independent set $S$ together with a perfect matching of $G-S$. We use
the lemma to determine all possible color class configurations for
orders $12$, $16$, and $18$, and we show that every listed
configuration is attained. Finally, we provide a splicing construction
showing that, for every even $n\geq16$, some connected cubic graph of
order $n$ admits a non-equitable $4$-total coloring. We may conclude
that $14$ is the largest order for which every $4$-total coloring of
every cubic graph is equitable.
\end{abstract}

\medskip
\noindent\textbf{Keywords:} Total coloring; equitable coloring; cubic graph; graph coloring.
\medskip

\section{Introduction}\label{sec:intro}
 
Total coloring is a classical topic in graph theory that involves
assigning colors to the vertices and edges of a graph, subject to
certain rules. A \emph{$k$-total coloring} of a graph $G$ assigns one
of $k$ colors to each vertex and each edge of $G$ in such a way that
adjacent vertices, adjacent edges, and any vertex and edge that are
incident receive distinct colors. The least $k$ for which $G$ admits a
$k$-total coloring is the \emph{total chromatic number} $\chi''(G)$.
Clearly $\chi''(G) \geq \Delta(G)+1$, where $\Delta(G)$ denotes the
maximum degree of $G$. The celebrated Total Coloring Conjecture, posed
independently by Behzad~\cite{behzad} and Vizing~\cite{vizing}, states
that $\chi''(G) \leq \Delta(G)+2$ for every simple graph $G$;
see~\cite{yap} for a thorough account. Graphs with
$\chi''(G) = \Delta(G)+1$ are said to be \emph{Type~1}, and graphs with
$\chi''(G) = \Delta(G)+2$ are \emph{Type~2}. The conjecture holds for
cubic graphs, since one can check that every graph with maximum degree
$3$ satisfies $\chi'' \leq 5$ (see~\cite{feng} for a concise proof),
and so every cubic graph is either Type~1 ($\chi''=4$) or Type~2
($\chi''=5$).
 
A $k$-total coloring is \emph{equitable} if the cardinalities of any
two of its color classes differ by at most one, and the \emph{equitable
total chromatic number} $\chi''_e(G)$ is the least $k$ for which $G$
admits an equitable $k$-total coloring. Wang~\cite{wang} proved that
every graph with maximum degree $3$ satisfies $\chi''_e \leq 5$, which
confirms, for cubic graphs, the natural equitable analogue of the Total
Coloring Conjecture. It was shown in~\cite{cubic} that deciding whether
a bipartite cubic graph admits an equitable $4$-total coloring is
NP-complete. The same work determined the equitable total chromatic
number of ladder-like families and exhibited a remarkable cubic graph
$R$ of order $20$. The graph $R$ is Type~1 and satisfies
$\chi''_e(R) = 5$; that is, \emph{no} $4$-total coloring of $R$ is
equitable; see Figure~\ref{fig:R}.
 
\begin{figure}[htbp]
\centering
\includegraphics[width=.60\linewidth]{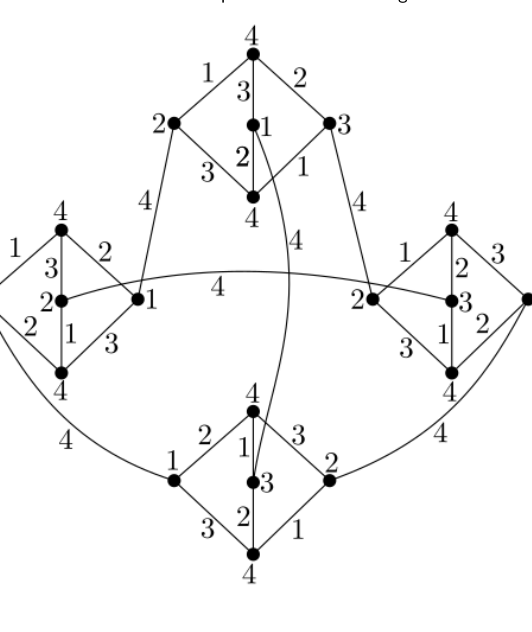}
\caption{A non-equitable $4$-total coloring of the graph $R$
from~\cite{cubic}. Vertex and edge labels indicate their colors.}
\label{fig:R}
\end{figure}
 
Motivated by the graph $R$, Stemock~\cite{bryson} investigated
equitable total colorings of regular graphs and posed the following
conjecture.
 
\begin{conjecture}[Stemock~\cite{bryson}]\label{conj:stemock}
Every $4$-total coloring of a cubic graph of order less than $20$ is
equitable.
\end{conjecture}
 
Notice that the scope of Conjecture~\ref{conj:stemock} is considerable,
since there are exactly $45{,}982$ connected cubic graphs of order less
than~$20$~\cite{bcgm}.
 
In this paper, we settle Conjecture~\ref{conj:stemock} and, more
generally, we determine exactly for which orders the conjectured
phenomenon occurs. Until now, it was known that the property fails at
order $20$, because of the graph $R$. First, we disprove the conjecture
by exhibiting non-equitable $4$-total colorings of the circular ladders
$L_{12}$ and $L_{18}$ (Section~\ref{sec:counterexamples}). Second, we
prove that every $4$-total coloring of a cubic graph of order $6$, $8$,
$10$, or $14$ is equitable (Section~\ref{sec:small}), and so $L_{12}$
is a counterexample of smallest possible order. Third, we establish a
structural decomposition of the $4$-total colorings of cubic graphs
(Section~\ref{sec:structure}): each color class is an independent set
$S$ together with a perfect matching of $G-S$. The decomposition yields
the complete lists of possible color class configurations for orders
$12$, $16$, and $18$, and we display explicit examples showing that
every listed configuration is attained. Finally, in
Section~\ref{sec:larger} we introduce a splicing operation that merges
two totally colored cubic graphs into a connected one while adding the
cardinalities of their color classes. Hence, for every even
$n \geq 16$, some connected cubic graph of order $n$ admits a
non-equitable $4$-total coloring. We may conclude that $n=14$ is the
largest order for which every $4$-total coloring of every cubic graph
is equitable.

We present the necessary definitions and notation in
Section~\ref{sec:prelim}, and we prove our results in
Sections~\ref{sec:structure} to~\ref{sec:larger}. We conclude in
Section~\ref{sec:conclusion} with additional comments and questions.
Preliminary versions of the results of Sections~\ref{sec:small}
and~\ref{sec:counterexamples} appeared in the first author's M.Sc.
dissertation~\cite{adauto}.
 
\section{Preliminaries}\label{sec:prelim}
 
In this paper, every graph $G$ is simple, finite, and cubic
($3$-regular) unless stated otherwise. The numbers of vertices and
edges of $G$ are denoted by $n$ and $m$, respectively; for a cubic graph, $n$ is even, $m = 3n/2$, and
the total number of \emph{elements} (vertices and edges) is $5n/2$. For
$S \subseteq V(G)$, we write $G - S$ for the subgraph induced by
$V(G) \setminus S$, and for a matching $M$ we write $V(M)$ for the set
of endpoints of the edges of $M$.
 
Given a $4$-total coloring $\mathcal{C}$ of $G$ with color set
$\{1,2,3,4\}$, the \emph{color class} $c_i$ is the set of elements of
color $i$. Each color class is a \emph{total independent set}: a set of
elements that are pairwise neither adjacent nor incident. The coloring
$\mathcal{C}$ is \emph{equitable} if
$\bigl||c_i|-|c_j|\bigr| \leq 1$ for all $i,j$; otherwise it is
\emph{non-equitable}. We call the multiset
$\{|c_1|,|c_2|,|c_3|,|c_4|\}$ the \emph{color class configuration} of
$\mathcal{C}$, written as a non-increasing sequence.
 
We follow the notation of~\cite{cubic, bryson} for the graph
families involved. The \emph{circular ladder} $L_n$ ($n$ even,
$n \geq 6$) is the prism $C_{n/2} \,\square\, K_2$; the edges joining
the two copies of $C_{n/2}$ are its \emph{rungs}. More generally,
$G(p,k)$ denotes the generalized Petersen graph on $2p$ vertices, so
that $L_{2p} = G(p,1)$ and the Petersen graph is $G(5,2)$. The
M\"obius ladder of order $n$ is denoted $M_n$. Among cubic graphs, the
complete graph $K_4$, the complete bipartite graph $K_{3,3}$, and the
M\"obius ladders are Type~2, and circular ladders are
Type~1~, except for $L_{10}$, which is Type~2 \cite{chet, yap}.
 
\section{The structure of \texorpdfstring{$4$}{4}-total colorings of cubic graphs}
\label{sec:structure}
 
The simple yet powerful lemma below is the basis of all of our
results. It states that a $4$-total coloring of a cubic graph is the
same thing as a partition of the vertex set into four independent sets
together with a compatible partition of the edge set into four
matchings.
 
\begin{lemma}[Decomposition lemma]\label{lem:decomposition}
Let $G$ be a cubic graph of order $n$ and let $\mathcal{C}$ be a
$4$-total coloring of $G$ with color set $\{1,2,3,4\}$. For each
$i \in \{1,2,3,4\}$, let $S_i$ be the set of vertices of color $i$ and
let $M_i$ be the set of edges of color $i$. Then:
\begin{enumerate}
\item[(i)] $(S_1,S_2,S_3,S_4)$ is a partition of $V(G)$ into (possibly
empty) independent sets;
\item[(ii)] for each $i$, the set $M_i$ is a perfect matching of
$G - S_i$;
\item[(iii)] $(M_1,M_2,M_3,M_4)$ is a partition of $E(G)$;
\item[(iv)] for each $i$, the cardinality $|S_i|$ is even and
$|c_i| = \bigl(n + |S_i|\bigr)/2$.
\end{enumerate}
Conversely, if $(S_1,S_2,S_3,S_4)$ is a partition of $V(G)$ into
independent sets and $(M_1,M_2,M_3,M_4)$ is a partition of $E(G)$ such
that each $M_i$ is a perfect matching of $G - S_i$, then assigning
color $i$ to the vertices of $S_i$ and to the edges of $M_i$ yields a
$4$-total coloring of $G$.
\end{lemma}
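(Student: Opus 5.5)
The plan is to verify each part directly from the definition of a $4$-total coloring, using that $G$ is cubic so each vertex sees exactly four elements: itself and its three incident edges. Parts (i) and (iii) are immediate. Every element receives exactly one color, so the $S_i$ partition $V(G)$ and the $M_i$ partition $E(G)$. Adjacent vertices receive distinct colors, so each $S_i$ is independent. Likewise, adjacent edges receive distinct colors, so each $M_i$ is a matching.

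The heart of the argument is (ii), and it comes from a pigeonhole count at each vertex. Fix a vertex $v$. The four elements consisting of $v$ and its three incident edges are pairwise adjacent or incident, so they receive four distinct colors, that is, all of $\{1,2,3,4\}$.

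From this we get two facts for each $i$:
\begin{itemize}
\item If $v \notin S_i$, then exactly one edge incident to $v$ has color $i$, so $v$ is covered by $M_i$.
\item If $v \in S_i$, then no edge at $v$ has color $i$.
\end{itemize}
By the second fact, no edge of $M_i$ has an endpoint in $S_i$, so $M_i \subseteq E(G-S_i)$. Combined with the first fact, $M_i$ is a matching of $G-S_i$ covering every vertex, hence a perfect matching of $G-S_i$.

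Part (iv) then follows by counting. A perfect matching of $G-S_i$ forces $n-|S_i|$ to be even, so $|M_i|=(n-|S_i|)/2$. Since $n$ is even, $|S_i|$ is even as well. Therefore
\[
|c_i| = |S_i|+|M_i| = |S_i| + \frac{n-|S_i|}{2} = \frac{n+|S_i|}{2}.
\]

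For the converse, assign color $i$ to $S_i \cup M_i$. This is a well-defined coloring of all elements because both families are partitions. We then check the three constraints in turn:
\begin{itemize}
\item Adjacent vertices differ in color since each $S_i$ is independent.
\item Adjacent edges differ since each $M_i$ is a matching.
\item If $v\in S_i$ and $e\in M_i$ were incident, then $e$ would have an endpoint in $S_i$. This contradicts $M_i \subseteq E(G-S_i)$.
\end{itemize}
So the assignment is a $4$-total coloring.

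I expect no real obstacle. The only point needing care is the pigeonhole step in (ii), which is where cubicity is used: degree $3$ plus the vertex itself gives exactly four mutually conflicting elements, so every color appears exactly once around $v$. The converse does not require the perfectness of the $M_i$, only that each $M_i$ avoids $S_i$.
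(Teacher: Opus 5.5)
Your proof is correct and follows the paper's argument essentially step for step. The same pigeonhole observation gives (ii): a vertex and its three incident edges use all four colors. Part (iv) and the converse then follow from the same counting and the same check of the three constraints.
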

 
\begin{proof}
Statements (i) and (iii) are immediate: every element receives exactly
one color, and adjacent vertices receive distinct colors, so each $S_i$
is independent.
 
(ii) The set $M_i$ is a matching, since adjacent edges receive distinct
colors, and no edge of $M_i$ is incident to a vertex of $S_i$, since a
vertex and an incident edge receive distinct colors; hence
$M_i \subseteq E(G - S_i)$. Now let $u \notin S_i$. The four elements
consisting of $u$ and its three incident edges are pairwise adjacent or
incident, so they receive four distinct colors; since only four colors
are available, every color occurs among them. Since the color of $u$ is
not $i$, some edge incident to $u$ has color $i$, that is, $M_i$
saturates $u$. Therefore $M_i$ is a perfect matching of $G - S_i$.
 
(iv) By (ii), $|M_i| = \bigl(n - |S_i|\bigr)/2$, so $n - |S_i|$ is
even; since $n$ is even, $|S_i|$ is even. Moreover
$|c_i| = |S_i| + |M_i| = \bigl(n + |S_i|\bigr)/2$.
 
For the converse, notice that every element receives exactly one color.
Adjacent vertices lie in distinct sets $S_i$ because each $S_i$ is
independent; adjacent edges receive distinct colors because each $M_i$
is a matching; and no vertex of $S_i$ is incident to an edge of $M_i$
because $M_i \subseteq E(G - S_i)$. Hence all three constraints of a
total coloring are satisfied.
\end{proof}
 
Throughout the paper we write $v_i = |S_i|$. Whenever we refer to the
\emph{vertex profile} of a coloring, we relabel the colors if necessary
so that $v_1 \geq v_2 \geq v_3 \geq v_4$; the resulting non-increasing
sequence $(v_1,v_2,v_3,v_4)$ is the vertex profile. By
Lemma~\ref{lem:decomposition}(iv), the vertex profile determines the
color class configuration, and summing over the four classes gives
$v_1+v_2+v_3+v_4 = n$, in agreement with (i). The following criterion for equitability is immediate; a related
formulation appears in~\cite{diana}.
 
\begin{corollary}\label{cor:equitable-criterion}
A $4$-total coloring of a cubic graph is equitable if and only if
$|v_i - v_j| \leq 2$ for all $i,j$, that is, if and only if
$v_1 - v_4 \leq 2$.
\end{corollary}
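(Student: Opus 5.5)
The plan is to translate the equitability condition on class sizes into a condition on vertex counts, using the formula of Lemma~\ref{lem:decomposition}(iv). That part of the lemma gives, for every color $i$,
\[
|c_i| = \frac{n+v_i}{2},
\]
where $v_i = |S_i|$. Subtracting two of these identities gives, for all $i,j$,
\[
|c_i|-|c_j| = \frac{v_i-v_j}{2}.
\]
Hence $\bigl||c_i|-|c_j|\bigr|\le 1$ holds exactly when $|v_i-v_j|\le 2$. Ranging over all pairs $i,j$, the definition of equitability therefore becomes: $|v_i-v_j|\le 2$ for all $i,j$. This proves the first equivalence.

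For the second formulation, I would relabel the colors as in the definition of the vertex profile, so that $v_1\ge v_2\ge v_3\ge v_4$. Relabeling does not change whether the coloring is equitable. For any $i,j$ we then have
\[
|v_i-v_j|\le v_1-v_4,
\]
so the condition on all pairs is equivalent to the single inequality $v_1-v_4\le 2$.

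I expect no real obstacle, since the result is a direct corollary of part (iv). The one point to state explicitly is that the factor $1/2$ turns the threshold $1$ for $|c_i|$ into the threshold $2$ for $v_i$. As a side remark, each $v_i$ is even by (iv), so $|v_i-v_j|\le 2$ means the $v_i$ take at most two consecutive even values.
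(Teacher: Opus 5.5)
Your proof is correct and follows the paper's own argument: both rest on the identity $\bigl||c_i|-|c_j|\bigr| = |v_i-v_j|/2$ obtained from Lemma~\ref{lem:decomposition}(iv). Your explicit remark on relabeling, which reduces the condition on all pairs to $v_1-v_4\le 2$, fills in a step the paper leaves implicit.
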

 
\begin{proof}
By Lemma~\ref{lem:decomposition}(iv),
$\bigl||c_i|-|c_j|\bigr|=|v_i-v_j|/2$ for all $i,j$.
\end{proof}
 
We remark that Lemma~\ref{lem:decomposition}(iv) recovers, for cubic
graphs, the classical parity constraint of conformable vertex
colorings~\cite{chet} without invoking conformability: the parity of
each vertex class is forced by the existence of the perfect matchings
$M_i$.
 
The next lemma collects the two counting bounds that drive all case
analyses in Sections~\ref{sec:small} and~\ref{sec:counterexamples}.
 
\begin{lemma}[Counting bounds]\label{lem:bounds}
Let $\mathcal{C}$ be a $4$-total coloring of a cubic graph $G$ of order
$n$, with vertex classes $S_1,\dots,S_4$ and $v_i = |S_i|$. Then:
\begin{enumerate}
\item[(a)] $5 v_i \leq 2n$ for each $i$;
\item[(b)] $v_i + 2 v_j \leq n$ for all $i \neq j$.
\end{enumerate}
\end{lemma}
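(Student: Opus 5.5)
Both bounds will come from double counting the edges between an independent vertex class and the rest of the graph, using the perfect matching supplied by Lemma~\ref{lem:decomposition}(ii).

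\textbf{Bound (a).} Fix $i$. Since $S_i$ is independent and $G$ is cubic, exactly $3v_i$ edges leave $S_i$, and each ends in a distinct slot of $V(G)\setminus S_i$. By Lemma~\ref{lem:decomposition}(ii), every vertex $u\notin S_i$ is saturated by an edge of $M_i\subseteq E(G-S_i)$. So $u$ uses one of its three edges inside $G-S_i$ and has at most $2$ neighbours in $S_i$. Counting the edges between $S_i$ and its complement from both sides gives $3v_i\le 2(n-v_i)$, that is, $5v_i\le 2n$.

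\textbf{Bound (b).} Fix $i\ne j$ and count the edges between $S_j$ and $V(G)\setminus S_j$ more carefully. Every vertex of $S_i$ lies outside $S_j$, so it is saturated by $M_j$. It is also not in $S_i$'s own matching but is incident to no edge of color $i$. The four elements at a vertex $u\in S_i$ are $u$ (color $i$) and its three edges, which carry the colors other than $i$. Hence exactly one edge at $u$ has color $j$, and that edge lies in $G-S_j$.

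I would sharpen this using the fact that a vertex $u\in S_i$ cannot be adjacent to another vertex of $S_i$. Its three neighbours therefore lie in $S_j$ or in the other two classes. Counting gives:
\begin{itemize}
\item $3v_j$ edges leave $S_j$;
\item each vertex outside $S_j$ receives at most $2$ of them;
\item each vertex of $S_i$ additionally has its $M_j$-edge ending outside $S_j\cup S_i$ (since $S_i$ is independent), so that edge's other endpoint is in neither $S_j$ nor $S_i$.
\end{itemize}

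The cleanest route is to count the edges of $M_j$ that meet $S_i$. These $v_i$ edges are distinct, since $M_j$ is a matching and $S_i$ is independent. Their other ends form a set $T$ of $v_i$ vertices disjoint from $S_i\cup S_j$. By the argument for (a), each vertex of $T$ uses one edge in $M_j$ that does not go to $S_j$, leaving at most $2$ edges toward $S_j$. Each vertex of $S_i$ likewise has at most $2$ neighbours in $S_j$.

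This crude count alone only reproduces a bound of type (a). So I would instead count the edges between $S_j$ and the complement of $S_i\cup S_j$. Write $R=V\setminus(S_i\cup S_j)$, with $|R|=n-v_i-v_j$.

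I have not found a clean choice of edge set that closes the argument. The plan is to search over the sets $S_i$, $S_j$, $R$, $M_i$, $M_j$ for one whose two-sided count yields $v_i+2v_j\le n$. This is the main obstacle.
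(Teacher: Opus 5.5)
Part (a) is correct. You double count the edges between $S_i$ and its complement. The paper instead counts the $3v_i$ edges at $S_i$ and the $(n-v_i)/2$ edges of $M_i$ as disjoint subsets of $E(G)$. Both rest on the same fact, that each vertex outside $S_i$ spends one edge on $M_i$.

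Part (b), however, is not proved. The proposal ends by saying no closing count was found, so this is a genuine gap. You had already built everything needed. Your set $T$ of $M_j$-partners of the vertices of $S_i$ has exactly $v_i$ elements, because $M_j$ is a matching and $S_i$ is independent. You also noted that $T$ is disjoint from $S_i\cup S_j$, so $T \subseteq R$. Comparing cardinalities gives $v_i = |T| \le |R| = n - v_i - v_j$, that is, $2v_i + v_j \le n$. Since (b) is claimed for every ordered pair $i \neq j$, exchanging $i$ and $j$ yields exactly $v_i + 2v_j \le n$.

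The paper states the same injection directly in the right orientation. It observes that $S_j \subseteq V(G)\setminus S_i = V(M_i)$. Since $S_j$ is independent, it contains at most one endpoint of each edge of $M_i$, so $v_j \le |M_i| = (n-v_i)/2$.

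Your search stalled because you kept counting edges between $S_j$ and its complement. That cut count uses only the local fact that each vertex outside $S_j$ loses one edge to $M_j$, so, as you noticed, it can never beat (a). Bound (b) is a vertex count: the matching pairs off the vertices, and an independent set occupies at most one vertex of each pair.
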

 
\begin{proof}
(a) Since $S_i$ is independent, the edges incident to vertices of $S_i$
number exactly $3 v_i$, and none of them belongs to $M_i$, because
$M_i$ avoids $S_i$ by Lemma~\ref{lem:decomposition}(ii). These
$3 v_i$ edges together with the $(n - v_i)/2$ edges of $M_i$ are
pairwise distinct. Hence
$3 v_i + (n - v_i)/2 \leq m = 3n/2$; that is, $5 v_i \leq 2n$.
 
(b) Since the vertex classes partition $V(G)$, we have
$S_j \subseteq V(G) \setminus S_i = V(M_i)$. The two endpoints of any
edge of $M_i$ are adjacent, so the independent set $S_j$ contains at
most one endpoint of each edge of $M_i$. Hence
$v_j \leq |M_i| = (n - v_i)/2$.
\end{proof}
 
\begin{definition}\label{def:feasible}
A non-increasing sequence $(v_1,v_2,v_3,v_4)$ of non-negative integers
is a \emph{feasible profile} for order $n$ if all its entries are even,
$v_1+v_2+v_3+v_4 = n$, $5v_1 \leq 2n$, and $v_2 + 2v_1 \leq n$.
\end{definition}
 
By Lemmas~\ref{lem:decomposition} and~\ref{lem:bounds}, the vertex
profile of any $4$-total coloring of a cubic graph of order $n$ is a
feasible profile for $n$ (notice that, for a non-increasing sequence,
condition (b) of Lemma~\ref{lem:bounds} holds for all $i \neq j$ as
soon as $v_2 + 2v_1 \leq n$). Feasibility is a necessary condition
only: whether a feasible profile is actually attained is a question
about the particular graph, addressed in
Sections~\ref{sec:counterexamples} and~\ref{sec:larger}.
 
\section{Orders \texorpdfstring{$6$, $8$, $10$, and $14$}{6, 8, 10, and 14}}\label{sec:small}
 
Notice that the smallest cubic graph $K_4$ is Type~2~\cite{yap}, and so
order $4$ is trivial, since $K_4$ admits no $4$-total coloring. We now
handle the first non-trivial orders in a uniform way, through the
decomposition lemma.
 
\begin{theorem}\label{thm:small}
Let $G$ be a cubic graph of order $n \in \{6, 8, 10, 14\}$. Then every
$4$-total coloring of $G$ is equitable.
\end{theorem}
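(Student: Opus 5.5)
The plan is purely arithmetic. By Lemma~\ref{lem:decomposition} and Lemma~\ref{lem:bounds}, the vertex profile $(v_1,v_2,v_3,v_4)$ of any $4$-total coloring of $G$ is a feasible profile for $n$ in the sense of Definition~\ref{def:feasible}. By Corollary~\ref{cor:equitable-criterion}, the coloring is equitable exactly when $v_1-v_4\le 2$. So it suffices to list, for each $n\in\{6,8,10,14\}$, all feasible profiles and check that each has $v_1-v_4\le 2$. No structural information about $G$ beyond these bounds should be needed.

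The enumeration proceeds order by order, using that all entries are even and sum to $n$.
\begin{itemize}
\item For $n=6$: condition $5v_1\le 12$ forces $v_1\le 2$. Since four even entries at most $2$ must sum to $6$, the only profile is $(2,2,2,0)$, with $v_1-v_4=2$.
\item For $n=8$: condition $5v_1\le 16$ again gives $v_1\le 2$. The sum $8$ forces $(2,2,2,2)$.
\item For $n=10$: condition $5v_1\le 20$ gives $v_1\le 4$. The choice $v_1=2$ is impossible, since then the sum is at most $8<10$. So $v_1=4$, and $v_2+2v_1\le 10$ forces $v_2\le 2$. The only profile is $(4,2,2,2)$, with difference $2$.
\item For $n=14$: condition $5v_1\le 28$ gives $v_1\le 4$. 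Again $v_1=2$ is impossible, so $v_1=4$. The bound $v_2\le 14-8=6$ is weaker than monotonicity, so $v_2\in\{2,4\}$. If $v_2=2$, the sum is at most $10<14$. Hence $v_2=4$, so $v_3+v_4=6$ with $v_3\le 4$. This gives the single profile $(4,4,4,2)$, with difference $2$.
\end{itemize}

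In every case $v_1-v_4\le 2$, which proves the theorem. I do not expect a genuine obstacle; the only care needed is to use both bounds of Lemma~\ref{lem:bounds} together with the sum constraint, so that no profile is missed.

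It is worth remarking why $n=12$ is absent from the statement. There, the bound $5v_1\le 24$ still gives $v_1\le 4$, but now $(4,4,4,0)$ is feasible and has difference $4$. This is consistent with the counterexample $L_{12}$ announced in the introduction. At $n=14$, by contrast, the sum constraint pushes $v_4$ up to $2$. This is the key point of the argument and should be stated explicitly.
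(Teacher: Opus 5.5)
Your proposal is correct and follows the paper's own proof essentially step by step. You reduce to feasible profiles via Lemma~\ref{lem:decomposition} and Lemma~\ref{lem:bounds}, enumerate them for each $n$, and check $v_1-v_4\le 2$ using Corollary~\ref{cor:equitable-criterion}. Your remark that $(4,4,4,0)$ is feasible at $n=12$ also matches Theorem~\ref{thm:n12}. The one step left implicit is that for $n=8$ and $n=14$ the bound $5v_1\le 2n$ only gives $v_1\le 3$ and $v_1\le 5$; parity then lowers these to $2$ and $4$.
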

 
\begin{proof}
Let $(v_1,v_2,v_3,v_4)$ be the vertex profile of a $4$-total coloring
of $G$; it is a feasible profile for $n$. By
Corollary~\ref{cor:equitable-criterion}, it suffices to show that every
feasible profile for $n$ satisfies $v_1 - v_4 \leq 2$.
 
If $n = 6$, then $5v_1 \leq 12$ gives $v_1 \leq 2$, and the only
feasible profile is $(2,2,2,0)$.
 
If $n = 8$, then $5v_1 \leq 16$ gives $v_1 \leq 3$, hence $v_1 \leq 2$
by parity, and the only feasible profile is $(2,2,2,2)$.
 
If $n = 10$, then $5v_1 \leq 20$ gives $v_1 \leq 4$. If $v_1 = 4$, then
$v_2 + 2v_1 \leq 10$ gives $v_2 \leq 3$, hence $v_2 \leq 2$, and the
sum condition forces $(4,2,2,2)$. If $v_1 \leq 2$, the sum of the four
entries is at most $8 < 10$, a contradiction. Hence the only feasible
profile is $(4,2,2,2)$.
 
If $n = 14$, then $5v_1 \leq 28$ gives $v_1 \leq 5$, hence
$v_1 \leq 4$. Four even entries at most $4$ summing to $14$ force the
profile $(4,4,4,2)$.
 
In every case we have $v_1 - v_4 \leq 2$, and so the coloring is
equitable, which completes the proof.
\end{proof}
 
The corresponding color class configurations are $(4,4,4,3)$ for
$n = 6$, $(5,5,5,5)$ for $n = 8$, $(7,6,6,6)$ for $n = 10$, and
$(9,9,9,8)$ for $n = 14$. In particular, Theorem~\ref{thm:small}
generalizes the known fact that every $4$-total coloring of the
Petersen graph is equitable~\cite{petersen}.

\section{Counterexamples and color class configurations}
\label{sec:counterexamples}
 
Order $12$ is the first order that admits a feasible profile with
$v_1-v_4\geq4$, and indeed Conjecture~\ref{conj:stemock} already fails
there. The counterexamples are the circular ladders $L_{12}$ and
$L_{18}$; see Figure~\ref{fig:ladders} for explicit
non-equitable $4$-total colorings; their color class configurations are
$(8,8,8,6)$ and $(12,12,12,9)$, respectively. One can check that these
configurations also follow from the circular-ladder construction of
Chetwynd and Hilton~\cite{chet}.

\begin{figure}[htbp]
\centering
\includegraphics[width=\linewidth]{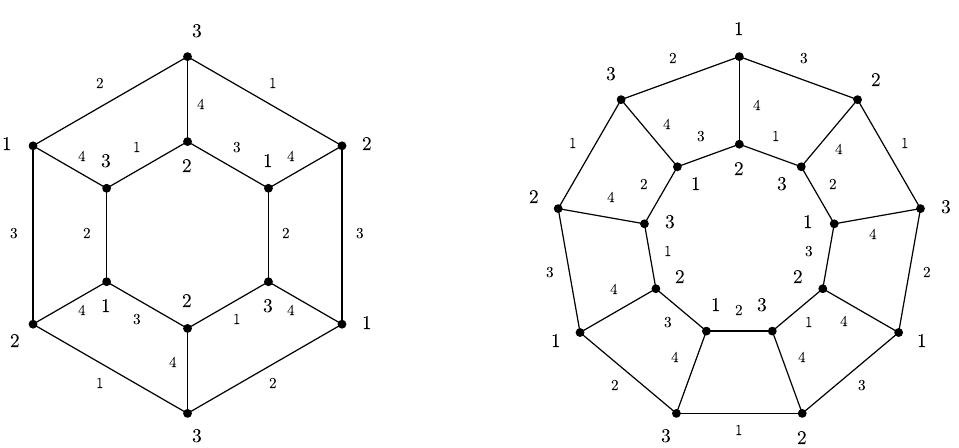}
\caption{Non-equitable $4$-total colorings of $L_{12}$ (left) and $L_{18}$ (right). Vertices are drawn in black, and the labels on vertices and edges indicate their colors.}
\label{fig:ladders}
\end{figure}
 
\begin{corollary}\label{cor:smallest}
Conjecture~\ref{conj:stemock} is false, and $L_{12}$ is a
counterexample of smallest possible order: every cubic graph of order
less than $12$ has all of its $4$-total colorings equitable.
\end{corollary}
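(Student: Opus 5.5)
The corollary has two parts: the conjecture is false, and no cubic graph of order below $12$ can serve as a counterexample. I will prove them separately.

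\emph{Falsity.} $L_{12}$ is a cubic graph of order $12<20$, so it suffices to exhibit a non-equitable $4$-total coloring of it. I will take the coloring in Figure~\ref{fig:ladders} and check, via the converse part of Lemma~\ref{lem:decomposition}, that it is a valid $4$-total coloring. Concretely, I read off the vertex classes $S_1,\dots,S_4$ and the edge classes $M_1,\dots,M_4$, and verify three things:
\begin{enumerate}
\item[(1)] each $S_i$ is independent;
\item[(2)] the $M_i$ partition the $18$ edges of $L_{12}$;
\item[(3)] each $M_i$ is a perfect matching of $L_{12}-S_i$.
\end{enumerate}
The configuration $(8,8,8,6)$ corresponds, by Lemma~\ref{lem:decomposition}(iv), to the vertex profile $(4,4,4,0)$. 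This profile satisfies $v_1-v_4=4>2$, so Corollary~\ref{cor:equitable-criterion} shows the coloring is non-equitable.

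To make the check explicit, write the two rims as $a_0,\dots,a_5$ and $b_0,\dots,b_5$, with rungs $a_jb_j$. Then $S_1=\{a_0,a_3,b_1,b_4\}$, $S_2=\{a_1,a_4,b_2,b_5\}$, $S_3=\{a_2,a_5,b_0,b_3\}$, and $S_4=\emptyset$. Each of these is independent, and $M_4$ must be a perfect matching of all of $L_{12}$. The remaining edge classes are then read from the figure.

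\emph{Minimality.} Cubic graphs have even order, so the orders below $12$ are $4,6,8,10$. Order $4$ is vacuous because $K_4$ is Type~2 and therefore has no $4$-total coloring. Orders $6$, $8$ and $10$ are covered directly by Theorem~\ref{thm:small}. Hence $L_{12}$ is a smallest counterexample.

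The only step that is not a direct citation is the explicit verification of the $L_{12}$ coloring. This is a finite check, and the decomposition lemma reduces it to matching conditions. The main obstacle is making sure the figure's edge labels satisfy condition (3), namely that each $M_i$ is a perfect matching of $L_{12}-S_i$, and in particular that $M_4$ is a perfect matching of all of $L_{12}$.
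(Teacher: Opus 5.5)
Your proposal is correct and follows the paper's own proof. The paper refutes the conjecture with a non-equitable $4$-total coloring of $L_{12}$ of configuration $(8,8,8,6)$, and gets minimality from order $4$ being vacuous ($K_4$ is Type~2) together with Theorem~\ref{thm:small} for orders $6$, $8$, $10$. Your explicit vertex classes do extend to a valid coloring: take $M_4$ to be the six rungs and give each rim edge the unique color of $\{1,2,3\}$ missing from its two endpoints. So the check you flag as the main obstacle goes through.
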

 
\begin{proof}
The coloring of $L_{12}$ in Figure~\ref{fig:ladders} is a
$4$-total coloring with color class configuration $(8,8,8,6)$, which is
not equitable. For orders less than $12$, notice that order $4$ is
vacuous, since $K_4$ is Type~2, and that orders $6$, $8$, and $10$ are
covered by Theorem~\ref{thm:small}. This concludes the proof.
\end{proof}
 
We remark that both $L_{12}$ and $L_{18}$ also admit \emph{equitable}
$4$-total colorings; indeed, $\chi''_e(L_n) = 4$ for every circular
ladder~\cite{cubic}.

We now determine, for each of the orders $12$, $16$, and $18$, all
possible color class configurations and show that every possibility is
attained.
 
\begin{theorem}\label{thm:n12}
In every $4$-total coloring of a cubic graph of order $12$, the color
class configuration is $(8,8,7,7)$ or $(8,8,8,6)$. Both configurations
are attained by $L_{12}$; in particular, $(8,8,8,6)$ is the unique
configuration of a non-equitable coloring.
\end{theorem}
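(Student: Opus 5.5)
The plan follows the template of Theorem~\ref{thm:small}: enumerate the feasible profiles for $n=12$ (Definition~\ref{def:feasible}), convert them to color class configurations with Lemma~\ref{lem:decomposition}(iv), and then show that each surviving configuration is realized on $L_{12}$.

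\textbf{Enumerating feasible profiles.} The bound $5v_1 \leq 24$ gives $v_1 \leq 4$, since $v_1$ is even. Four even entries, each at most $4$, must sum to $12$.
\begin{itemize}
\item If $v_1 = 4$, then $v_2 + 2v_1 \leq 12$ gives $v_2 \leq 4$, which adds nothing new. The remaining entries $v_2 \geq v_3 \geq v_4$ are even, at most $4$, and sum to $8$. The possibilities are $(4,4,0)$ and $(4,2,2)$.
\item If $v_1 \leq 2$, the total is at most $8 < 12$, which is impossible.
\end{itemize}
So the only feasible profiles are $(4,4,4,0)$ and $(4,4,2,2)$.

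\textbf{Converting to configurations.} The formula $|c_i| = (12 + v_i)/2$ turns these profiles into $(8,8,8,6)$ and $(8,8,7,7)$ respectively. By Corollary~\ref{cor:equitable-criterion}, the first is non-equitable ($v_1 - v_4 = 4$) and the second is equitable. This proves the first sentence of the theorem and the uniqueness of the non-equitable configuration.

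\textbf{Attainment on $L_{12}$.} The configuration $(8,8,8,6)$ is realized by the coloring in Figure~\ref{fig:ladders}, as already used in Corollary~\ref{cor:smallest}. For $(8,8,7,7)$, I would exhibit an equitable $4$-total coloring of $L_{12}$; its existence is guaranteed by $\chi''_e(L_{12}) = 4$ from~\cite{cubic}. Since $(8,8,7,7)$ is the only equitable configuration at order $12$, any such coloring attains it.

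For concreteness, I would build the coloring through the converse of Lemma~\ref{lem:decomposition}, with vertex profile $(4,4,2,2)$:
\begin{itemize}
\item Label the outer cycle $a_0,\dots,a_5$ and the inner cycle $b_0,\dots,b_5$, with rungs $a_jb_j$.
\item Choose independent sets $S_1,S_2$ of size $4$ and $S_3,S_4$ of size $2$ partitioning the vertices.
\item Check that $G-S_i$ has a perfect matching $M_i$ for each $i$, and that these four matchings partition the $18$ edges.
\end{itemize}

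\textbf{Main obstacle.} The only non-routine step is this explicit construction. The counting part is immediate, and the theorem's attainment claim can in any case be settled by citing~\cite{cubic}, so the construction is a concreteness check rather than a logical necessity. To find it, I would first try $S_1=\{a_0,a_3,b_1,b_4\}$ and $S_2=\{a_1,a_4,b_2,b_5\}$, leaving $\{a_2,a_5,b_0,b_3\}$ to be split into two independent pairs. Then I would search the matchings by hand, adjusting the choice if the rung edges cause a conflict.
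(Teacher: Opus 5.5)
Your proof is correct and is essentially the paper's argument: bound (a) gives $v_1\le 4$, the only feasible profiles are $(4,4,4,0)$ and $(4,4,2,2)$, and attainment comes from Figure~\ref{fig:ladders} together with $\chi''_e(L_{12})=4$ from~\cite{cubic}. Your optional explicit construction also works: with your $S_1,S_2$ the matchings $M_1,M_2$ are forced, the ten leftover edges form the paths $b_0a_0a_1b_1b_2a_2$ and $b_3a_3a_4b_4b_5a_5$, and coloring each path alternately with colors $3$ and $4$ forces $S_3=\{a_2,b_0\}$ and $S_4=\{a_5,b_3\}$.
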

 
\begin{proof}
Bound (a) of Lemma~\ref{lem:bounds} gives $v_1 \leq 24/5$, hence
$v_1 \leq 4$. The multisets of four even numbers at most $4$ summing to
$12$ are $(4,4,4,0)$ and $(4,4,2,2)$; both satisfy
$v_2 + 2v_1 = 12 \leq 12$, so both are feasible. By
Lemma~\ref{lem:decomposition}(iv), they correspond to the
configurations $(8,8,8,6)$ and $(8,8,7,7)$, respectively, and only the
former has $v_1-v_4\geq4$. The former is attained by the coloring in
Figure~\ref{fig:ladders}; the latter is attained because $L_{12}$
admits an equitable $4$-total coloring~\cite{cubic}.
\end{proof}
 
\begin{theorem}\label{thm:n16}
In every $4$-total coloring of a cubic graph of order $16$, the color
class configuration is $(10,10,10,10)$ or $(11,10,10,9)$. Both
configurations are attained by a connected cubic graph $H_{16}$; in
particular, $(11,10,10,9)$ is the unique configuration of a
non-equitable coloring.
\end{theorem}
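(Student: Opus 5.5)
The argument has two parts: enumerate the feasible profiles for $n=16$ using Definition~\ref{def:feasible}, then exhibit a graph $H_{16}$ realizing both of them.

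\emph{Enumeration.} Bound (a) of Lemma~\ref{lem:bounds} gives $5v_1 \leq 32$, so $v_1 \leq 6$, and by parity $v_1 \leq 6$ still holds. We split on $v_1$.
\begin{itemize}
\item If $v_1 = 6$, bound (b) gives $v_2 \leq 16 - 12 = 4$. The remaining three even entries, each at most $4$, must sum to $10$. The only non-increasing choice is $(4,4,2)$, giving the profile $(6,4,4,2)$.
\item If $v_1 = 4$, four even entries at most $4$ summing to $16$ force $(4,4,4,4)$.
\item If $v_1 \leq 2$, the sum is at most $8 < 16$, which is impossible.
\end{itemize}
We also check that $(6,4,4,2)$ satisfies $v_2 + 2v_1 = 16 \leq 16$, so it is indeed feasible. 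By Lemma~\ref{lem:decomposition}(iv), $|c_i| = 8 + v_i/2$. Hence the profile $(4,4,4,4)$ gives configuration $(10,10,10,10)$, and $(6,4,4,2)$ gives $(11,10,10,9)$. Only the latter has $v_1 - v_4 \geq 4$, so by Corollary~\ref{cor:equitable-criterion} it is the unique non-equitable configuration.

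\emph{Attainment.} We must choose a connected cubic graph $H_{16}$ of order $16$ and give two explicit colorings of it. For the equitable coloring, a natural candidate is $L_{16}$ or another ladder-like graph, where \cite{cubic} provides one. The non-equitable profile $(6,4,4,2)$ is extremal in both bounds of Lemma~\ref{lem:bounds}, and this tightness guides the construction.
\begin{itemize}
\item Equality in (a) means $S_1$ is an independent set of $6$ vertices whose $18$ incident edges, together with the $5$ edges of $M_1$, exhaust all $24$ edges. Thus $G - S_1$ is a $10$-vertex graph whose edges form exactly a perfect matching $M_1$.
\item Equality in (b) means $S_2$ picks exactly one endpoint of each $M_1$-edge, and so does $S_3$. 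So $S_2$ and $S_3$ partition $V(G)\setminus S_1$ minus the two vertices of $S_4$, which must form one edge of $M_1$ split as $\{x,y\}$.
\end{itemize}
It follows that $H_{16}$ has the form: $6$ vertices of $S_1$, and $5$ matching edges on the remaining $10$ vertices, each of which has exactly $2$ neighbours in $S_1$. I would build such a graph as a bipartite-like incidence structure between $S_1$ and $V(M_1)$, taking care that the two vertices of $S_4$, which are adjacent via $M_1$, are handled consistently with the sets $S_2$ and $S_3$. Then I would assign $M_2, M_3, M_4$ by hand, using the converse direction of Lemma~\ref{lem:decomposition}.

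\emph{Main obstacle.} The hard step is the explicit construction: finding the edges between $S_1$ and the $10$ matched vertices, with $M_2, M_3, M_4$ perfect matchings of $G-S_2$, $G-S_3$ and $G-S_4$ that partition the $18$ edges incident to $S_1$ together with the leftover $M_1$-edges. I would first try a symmetric structure, such as a 6-cycle-like arrangement of $S_1$ with the $M_1$-edges as "rungs", and verify it by hand. Alternatively, I would use the splicing idea of Section~\ref{sec:larger} to combine a colored small graph (for example one of order $6$ or $10$ with profile $(2,2,2,0)$ or $(4,2,2,2)$) so that the classes add to $(6,4,4,2)$. Finally, I would check that the same graph also admits an equitable coloring.
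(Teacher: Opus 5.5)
\paragraph{Verdict.} Your enumeration of feasible profiles is correct and is essentially the paper's argument. Your direct use of $v_2+2v_1\le 16$ to get $v_2\le 4$ is even a step shorter than the paper's $v_2\le 5$ followed by parity. The gap is the attainment half of the statement, which is an existence claim. The paper settles it with an explicit edge list of $H_{16}$ and explicit partitions $(S_i,M_i)$ for both profiles (Appendix~\ref{app:H16}), checked via the converse of Lemma~\ref{lem:decomposition}. You give no graph and no colorings.

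\paragraph{The blueprint for $(6,4,4,2)$ is wrong.} The structure you plan to build from fails on three counts.

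First, bound (a) is not tight, since $5\cdot 6=30<32$. Because $S_1$ is independent, it sends exactly $18$ edges to the other $10$ vertices, whose degree sum is $30$. So $G-S_1$ has $6$ edges: $M_1$ plus exactly one extra edge. In particular, the ten vertices cannot all have two neighbours in $S_1$, which would need $20$ edges.

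Second, the tight instance of bound (b) is $v_1\le |M_2|=|M_3|=6$. That is, $S_1$ meets every edge of $M_2$ and of $M_3$ in exactly one endpoint. It is not true that $S_2$ or $S_3$ meets every edge of $M_1$, since $|M_1|=5>4$.

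Third, $S_4$ is independent, so its two vertices cannot form an edge of $M_1$. Count the endpoints of the five $M_1$-edges, which carry colors $2,3,4$ with multiplicities $4,4,2$. This forces three $M_1$-edges of type $\{2,3\}$ and one each of types $\{2,4\}$ and $\{3,4\}$.

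The paper's certificate confirms all of this. With $S_1=\{2,3,8,9,12,14\}$, the extra edge of $G-S_1$ is $(6,7)$, and $S_4=\{1,11\}$ sits on the $M_1$-edges $(1,7)$ and $(5,11)$. A search guided by your description would therefore look for a structure that does not exist.

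\paragraph{The splicing fallback works, but you must carry it out.} Lemma~\ref{lem:splice} would even give a computer-free proof. Take $L_6$ with a coloring of configuration $(4,4,4,3)$, and the Petersen graph $P$ (Type~1) with a coloring of configuration $(7,6,6,6)$. Splice along an edge $e_1$ of the $3$-class of $L_6$ and an edge $e_2$ of $P$.
\begin{itemize}
\item If $e_2$ lies in a $6$-class, then $\pi$ sends the $7$-class of $P$ to a $4$-class of $L_6$, and the $3$-class receives a $6$-class. This gives $(11,10,10,9)$.
\item If $e_2$ lies in the $7$-class, the $3$-class receives it, and you get $(10,10,10,10)$.
\end{itemize}
The theorem needs both colorings on one graph. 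Since $P$ is arc-transitive, you can compose a coloring of $P$ with a suitable automorphism so that the same fixed arc $(u_2,v_2)$ lies in either kind of class. Then both colorings live on one graph $L_6\oplus P$, which is connected because both pieces are $2$-edge-connected.

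Your remark about combining profiles $(2,2,2,0)$ and $(4,2,2,2)$ does not identify which class pairing is needed, nor how to get both configurations on the same graph. Without such a construction, or an explicit certificate like the paper's, the existence claims of the theorem remain unproved.
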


\begin{proof}
Bound (a) of Lemma~\ref{lem:bounds} gives $v_1\leq32/5$, hence
$v_1\leq6$. If $v_1=6$, then bound (b) gives
$v_2\leq(16-6)/2=5$, hence $v_2\leq4$, and the sum condition forces
$(6,4,4,2)$. If $v_1=4$, the sum condition forces $(4,4,4,4)$. If
$v_1\leq2$, the sum is at most $8<16$. Thus the only feasible profiles
are $(6,4,4,2)$ and $(4,4,4,4)$, corresponding to
$(11,10,10,9)$ and $(10,10,10,10)$, respectively.

We refer to Appendix~\ref{app:H16} for an edge list of $H_{16}$ and
for explicit partitions $(S_i,M_i)$ that certify both profiles through
Lemma~\ref{lem:decomposition}. The two colorings are displayed in
Figure~\ref{fig:H16}.
\end{proof}

\begin{figure}[htbp]
\centering
\includegraphics[width=\linewidth]{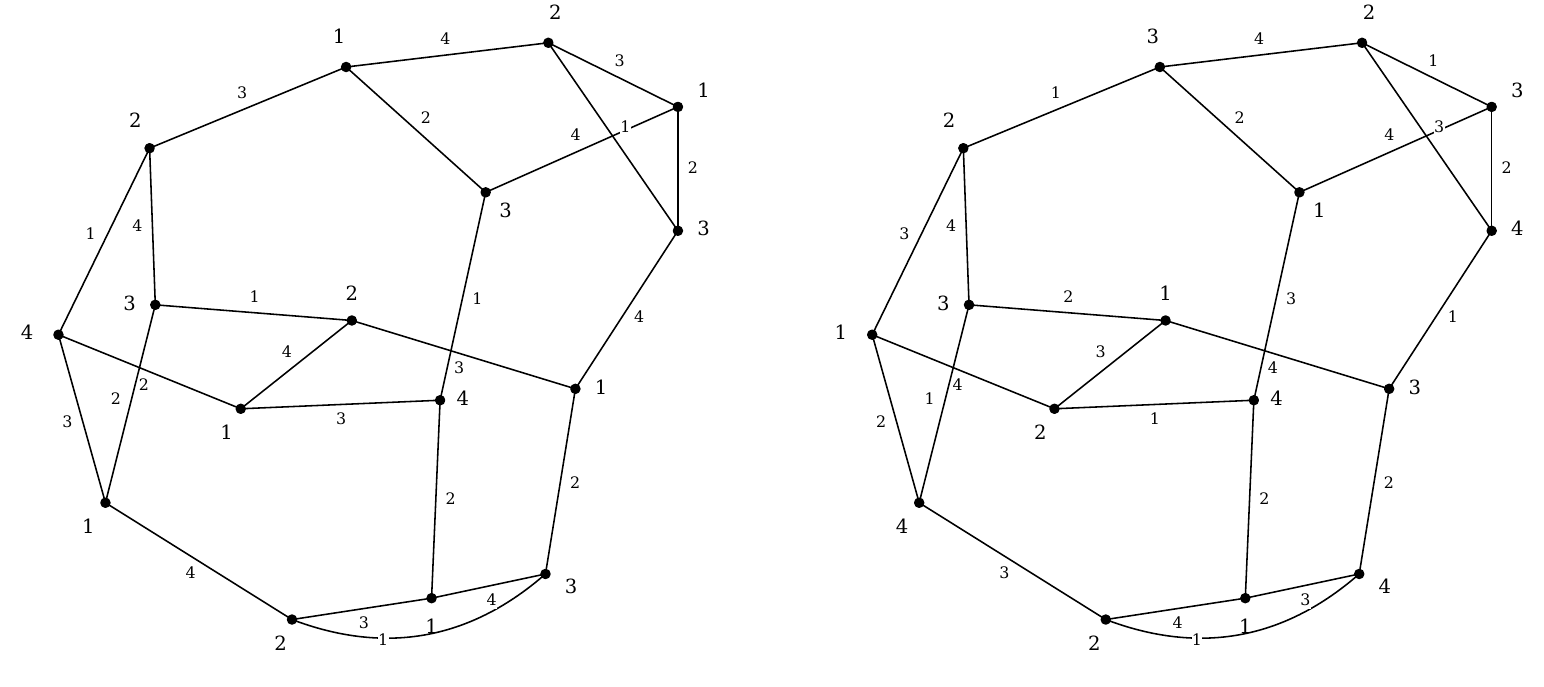}
\caption{Two $4$-total colorings of the same connected cubic graph $H_{16}$. The left coloring has configuration $(11,10,10,9)$ and the right coloring has configuration $(10,10,10,10)$. As in Figure~\ref{fig:ladders}, vertex and edge labels indicate colors.}
\label{fig:H16}
\end{figure}

\begin{theorem}\label{thm:n18}
In every $4$-total coloring of a cubic graph of order $18$, the color
class configuration is one of
\[
(12,11,11,11),\qquad (12,12,11,10),\qquad (12,12,12,9).
\]
All three configurations are attained. Consequently, the
non-equitable configurations are exactly $(12,12,11,10)$ and
$(12,12,12,9)$.
\end{theorem}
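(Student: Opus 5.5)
The plan has two parts: enumerate the feasible profiles for $n=18$, then exhibit a coloring realizing each one.

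\textbf{Enumeration.} Bound (a) of Lemma~\ref{lem:bounds} gives $5v_1 \leq 36$, so $v_1 \leq 7$, and parity gives $v_1 \leq 6$. We split on the value of $v_1$.
\begin{itemize}
\item If $v_1 = 6$, bound (b) gives $v_2 \leq (18-6)/2 = 6$. The even non-increasing completions with $v_2+v_3+v_4 = 12$ and entries at most $6$ are $(6,6,6,0)$, $(6,6,4,2)$ and $(6,4,4,4)$.
\item If $v_1 = 4$, the remaining entries are at most $4$, so the total is at most $16 < 18$, which is impossible.
\item If $v_1 \leq 2$, the total is even smaller, so this case is impossible as well.
\end{itemize}
The three profiles all satisfy $v_2 + 2v_1 \leq 18$, so they are feasible. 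Lemma~\ref{lem:decomposition}(iv), via $|c_i| = (18+v_i)/2$, turns them into $(12,12,12,9)$, $(12,12,11,10)$ and $(12,11,11,11)$, respectively. Corollary~\ref{cor:equitable-criterion} then identifies the non-equitable ones as exactly those with $v_1 - v_4 \geq 4$, namely the first two.

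\textbf{Attainment.}
\begin{itemize}
\item $(12,12,12,9)$ is already attained by the coloring of $L_{18}$ in Figure~\ref{fig:ladders}.
\item $(12,11,11,11)$ is attained by an equitable $4$-total coloring of $L_{18}$, which exists because $\chi''_e(L_n) = 4$~\cite{cubic}. Such a coloring can only have this configuration, since it is the only equitable one.
\item $(12,12,11,10)$, with vertex profile $(6,6,4,2)$, is the remaining case. I would realize it on a concrete cubic graph of order $18$ by giving explicit sets $(S_i,M_i)$ and invoking the converse part of Lemma~\ref{lem:decomposition}.
\end{itemize}

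\textbf{Constructing the $(6,6,4,2)$ coloring.} My first attempt is to modify the $L_{18}$ coloring locally. Recolor two vertices out of one of the three size-$6$ classes into the size-$0$ class, and adjust the edge colors on a short alternating path or cycle of edges so that each $M_i$ stays a perfect matching of $G - S_i$.

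If that fails, I would use splicing. Combine a $(6,4,4,2)$-colored $H_{16}$ piece with a suitable small piece. Alternatively, directly search a generalized Petersen graph or another $18$-vertex cubic graph: pick $S_1, S_2$ of size $6$, $S_3$ of size $4$ and $S_4$ of size $2$, then find the four perfect matchings.

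I expect this explicit construction to be the main obstacle. Everything else is routine arithmetic with Lemmas~\ref{lem:decomposition} and~\ref{lem:bounds}. The certificate would be recorded as an edge list plus the partition, as was done for $H_{16}$.
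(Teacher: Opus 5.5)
\textbf{Where the proposal agrees with the paper.} Your enumeration of feasible profiles is exactly the paper's, and so is your treatment of $(12,12,12,9)$ via $L_{18}$. Your treatment of $(12,11,11,11)$, via an equitable $4$-total coloring of $L_{18}$ from $\chi''_e(L_n)=4$, is also legitimate. It is the same citation-based argument the paper uses for $L_{12}$ in Theorem~\ref{thm:n12}, whereas for order $18$ the paper gives an explicit certificate on a graph $H_{18}$.

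\textbf{The gap: $(12,12,11,10)$ is never constructed.} The statement asserts that this configuration is attained, and your proposal only lists strategies without carrying any of them out. So the attainment claim is unproved, and with it the word ``exactly'' in the last sentence. Moreover, the one concrete route you name cannot work as stated. Splicing adds orders, so combining $H_{16}$ with any piece gives order at least $16+6=22$: there is no cubic graph of order $2$, and $K_4$ admits no $4$-total coloring. The local recoloring of $L_{18}$ and the computer search are plausible but unverified. The paper closes this case with an explicit integer-programming certificate on the connected graph $H_{18}$ (Appendix~\ref{app:H18}).

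\textbf{How to close it with your splicing idea.} Splicing does work with the right ingredients, namely $18=12+6$.
\begin{enumerate}
\item Take $L_{12}$ with the coloring of Figure~\ref{fig:ladders}. Its vertex profile is $(4,4,4,0)$; say $|S_1|=|S_2|=|S_3|=4$ and $S_4=\emptyset$.
\item Take $L_6$ with any $4$-total coloring. Its profile is $(2,2,2,0)$; call its vertex-free color $z$, so $M_z$ is a $3$-edge perfect matching of $L_6$.
\item Apply Lemma~\ref{lem:splice} with $u_1v_1\in M_3$ (nonempty, since $|M_3|=4$) and $u_2v_2\in M_z$. The permutation must send $z$ to $3$. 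It sends the three colors of $L_6$ that carry two vertices each to $1,2,4$.
\item The vertex classes then have sizes $(4+2,\,4+2,\,4+0,\,0+2)=(6,6,4,2)$. The color class configuration is $(8+4,\,8+4,\,8+3,\,6+4)=(12,12,11,10)$.
\end{enumerate}
Both ladders are $2$-edge-connected, so $L_{12}\oplus L_6$ is a connected cubic graph of order $18$. For the theorem as stated, which does not require connectedness, even the disjoint union $L_{12}\cup L_6$ suffices, with the colors of $L_6$ permuted so that $z\mapsto 3$.
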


\begin{proof}
Bound (a) of Lemma~\ref{lem:bounds} gives $v_1\leq36/5$, hence
$v_1\leq6$. If $v_1\leq4$, the sum of the four entries is at most
$16<18$, so $v_1=6$. The multisets of the form
$(6,v_2,v_3,v_4)$ with even entries at most $6$ summing to $18$ are
$(6,6,6,0)$, $(6,6,4,2)$, and $(6,4,4,4)$; all satisfy
$v_2+2v_1\leq18$. They correspond, respectively, to the configurations
$(12,12,12,9)$, $(12,12,11,10)$, and $(12,11,11,11)$.

The first is attained by $L_{18}$ in Figure~\ref{fig:ladders}. We
refer to Appendix~\ref{app:H18} for the definition of a connected cubic
graph $H_{18}$ and for certificates showing that $H_{18}$ attains each
of the other two configurations; the corresponding colorings appear in
Figure~\ref{fig:H18}.
\end{proof}

\begin{figure}[htbp]
\centering
\includegraphics[width=\linewidth]{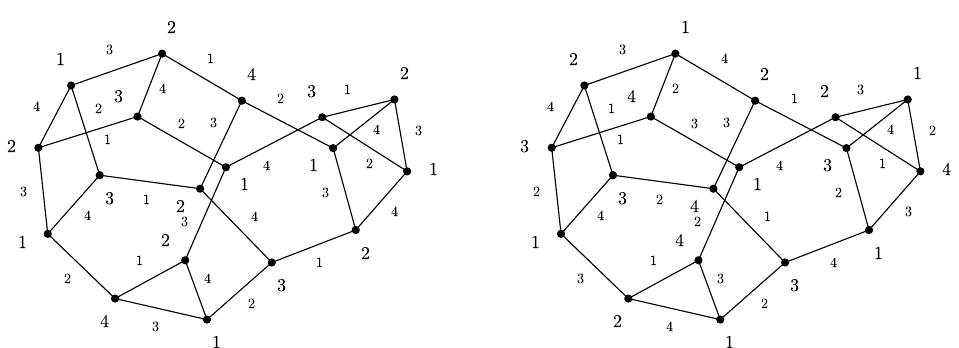}
\caption{Two $4$-total colorings of the same connected cubic graph $H_{18}$. The left coloring has configuration $(12,12,11,10)$ and the right coloring has configuration $(12,11,11,11)$. As in Figure~\ref{fig:ladders}, vertex and edge labels indicate colors.}
\label{fig:H18}
\end{figure}

\begin{remark}\label{rem:otherexamples}
Non-equitable $4$-total colorings of order $12$ and $18$ are not
restricted to circular ladders. Indeed, the generalized Petersen graphs
$G(6,2)$ and $G(9,2)$ admit non-equitable $4$-total colorings with
configurations $(8,8,8,6)$ and $(12,12,12,9)$, respectively.
\end{remark}

\begin{remark}\label{rem:CHladders}
One can check that, in the colorings of circular ladders constructed
in~\cite{chet}, each pasted block contributes three elements to one
fixed color and four elements to each of the other three. The
resulting configurations are $(4k,4k,4k,3k)$ for $L_{6k}$ with
$k\geq1$, $(5{+}4k,5{+}4k,5{+}4k,5{+}3k)$ for $L_{8+6k}$ with
$k\geq0$, and $(10{+}4k,10{+}4k,10{+}4k,10{+}3k)$ for $L_{16+6k}$
with $k\geq0$. Hence the
Chetwynd--Hilton coloring of $L_n$ is non-equitable precisely for the
even orders $n \geq 12$ with $n \notin \{14, 16, 22\}$. Order $14$ is
a genuine exception, by Theorem~\ref{thm:small}; orders $16$ and $22$
are exceptions of this particular construction only, as
Theorem~\ref{thm:n16} (the graph $H_{16}$) and
Theorem~\ref{thm:allorders} below show.
\end{remark}
 
\section{All larger orders}\label{sec:larger}
 
Theorems~\ref{thm:small} and~\ref{thm:n16}, together with the
counterexamples of orders $12$, $16$, $18$, and $20$, raise the natural
question of what happens for orders beyond $20$. In this section we
show that non-equitable $4$-total colorings exist for \emph{every} even
order $n \geq 16$, and so order $14$ is genuinely extremal. The key
tool is a splicing operation that merges two totally colored cubic
graphs.
 
\begin{lemma}[Splice lemma]\label{lem:splice}
Let $G_1$ and $G_2$ be disjoint cubic graphs with $4$-total colorings
$\mathcal{C}_1$ and $\mathcal{C}_2$ over the color set $\{1,2,3,4\}$,
and let $u_1v_1 \in E(G_1)$ and $u_2v_2 \in E(G_2)$. Then there exists
a permutation $\pi$ of $\{1,2,3,4\}$ such that
\[
\pi\bigl(\mathcal{C}_2(u_2v_2)\bigr) = \mathcal{C}_1(u_1v_1), \qquad
\pi\bigl(\mathcal{C}_2(u_2)\bigr) \neq \mathcal{C}_1(u_1), \qquad
\pi\bigl(\mathcal{C}_2(v_2)\bigr) \neq \mathcal{C}_1(v_1).
\]
For every such $\pi$, the graph
\[
G_1 \oplus G_2 \;=\;
\bigl(G_1 \cup G_2\bigr) - \{u_1v_1,\, u_2v_2\} + \{u_1u_2,\, v_1v_2\}
\]
is a simple cubic graph, connected whenever $G_1$ and $G_2$ are
connected and at least one of $u_1v_1$, $u_2v_2$ is not a bridge, and
the assignment $\mathcal{C}$ that coincides with $\mathcal{C}_1$ on the
remaining elements of $G_1$, with $\pi \circ \mathcal{C}_2$ on the
remaining elements of $G_2$, and that assigns the color
$\mathcal{C}_1(u_1v_1)$ to both new
edges $u_1u_2$ and $v_1v_2$, is a $4$-total coloring of
$G_1 \oplus G_2$. Moreover, for each color $i$, the class of color $i$
in $\mathcal{C}$ has cardinality equal to the sum of the cardinalities
of the class of color $i$ in $\mathcal{C}_1$ and of the class of color
$i$ in $\pi \circ \mathcal{C}_2$.
\end{lemma}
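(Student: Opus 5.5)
We prove the existence of $\pi$ first, then the structural claims about $G_1\oplus G_2$, then the validity of the coloring, and finally the count.

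\emph{Existence of $\pi$.} Write $a=\mathcal{C}_1(u_1v_1)$, $b=\mathcal{C}_1(u_1)$, $c=\mathcal{C}_1(v_1)$. Since $u_1v_1$ is an edge, $a,b,c$ are pairwise distinct; let $d$ be the fourth color. Likewise $a'=\mathcal{C}_2(u_2v_2)$, $b'=\mathcal{C}_2(u_2)$, $c'=\mathcal{C}_2(v_2)$ are pairwise distinct, with fourth color $d'$. Take $\pi(a')=a$, $\pi(b')=c$, $\pi(c')=d$ and $\pi(d')=b$. This is a bijection. It gives $\pi(b')=c\neq b$ and $\pi(c')=d\neq c$, as required.

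\emph{Simplicity, regularity and connectivity.} Degrees are unchanged: each of $u_1,v_1,u_2,v_2$ loses one edge and gains one. The new edges join vertices of the disjoint graphs $G_1$ and $G_2$, so they cannot duplicate an existing edge. They are also distinct from each other, since $u_1\neq v_1$. For connectivity, assume without loss of generality that $u_1v_1$ is not a bridge. Then $G_1-u_1v_1$ is connected. The graph $G_2-u_2v_2$ has at most two components, one containing $u_2$ and one containing $v_2$. The edges $u_1u_2$ and $v_1v_2$ attach each of these components to the connected piece $G_1-u_1v_1$.

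\emph{Validity of the coloring.} Every element keeps a color, and conflicts can only occur near the four modified vertices. At $u_1$, the three remaining elements other than $u_1v_1$ (namely $u_1$ and its two other edges) had colors distinct from $a$ in $\mathcal{C}_1$. The new edge $u_1u_2$ has color $a$, which therefore conflicts with none of them. The same argument applies at $v_1$. At $u_2$, after applying $\pi$, the old edge $u_2v_2$ had color $\pi(a')=a$. So the other edges at $u_2$ and the vertex $u_2$ avoid $a$, and the new edge of color $a$ is compatible there; the same holds at $v_2$. The remaining constraints are the vertex adjacencies $u_1u_2$ and $v_1v_2$. These are exactly the inequalities $\pi(\mathcal{C}_2(u_2))\neq\mathcal{C}_1(u_1)$ and $\pi(\mathcal{C}_2(v_2))\neq\mathcal{C}_1(v_1)$. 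Finally, $u_1u_2$ and $v_1v_2$ share no endpoint, so giving both the color $a$ is allowed. Equivalently, one could verify the converse part of Lemma~\ref{lem:decomposition}, but the direct check is shorter.

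\emph{Counting.} We removed two elements of color $a$, namely $u_1v_1$ in $\mathcal{C}_1$ and $u_2v_2$ in $\pi\circ\mathcal{C}_2$, and added two elements of color $a$. No other element changed color. Hence each class has cardinality equal to the sum of the two corresponding classes.

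The only genuinely delicate step is exhibiting $\pi$. That is handled by the derangement-style assignment above, which works because a fourth color is always available to absorb $\pi(c')$.
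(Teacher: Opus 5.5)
Your proof is correct and follows the paper's argument almost step for step: the degree count and simplicity, connectivity via the non-bridge side, the local check at $u_1,v_1,u_2,v_2$, and the two-removed/two-added count for color $\mathcal{C}_1(u_1v_1)$. The only difference is in the existence of $\pi$. You exhibit one valid $\pi$ explicitly, sending $\mathcal{C}_2(u_2)$ to $\mathcal{C}_1(v_1)$ and $\mathcal{C}_2(v_2)$ to the fourth color. The paper instead shows by inclusion--exclusion that exactly $3$ of the $6$ permutations with $\pi(\mathcal{C}_2(u_2v_2))=\mathcal{C}_1(u_1v_1)$ work; your construction is more direct, while the paper's count gives slightly more information.
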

 
\begin{proof}
Write $c = \mathcal{C}_1(u_1v_1)$, $d = \mathcal{C}_2(u_2v_2)$,
$a = \mathcal{C}_1(u_1)$, $a' = \mathcal{C}_1(v_1)$,
$b = \mathcal{C}_2(u_2)$, and $b' = \mathcal{C}_2(v_2)$. Since $u_1$
and $v_1$ are adjacent, $a \neq a'$; likewise $b \neq b'$; and
$a, a' \neq c$ while $b, b' \neq d$, by incidence.
 
First we prove the existence of $\pi$. The permutations $\pi$ with $\pi(d) = c$
restrict to bijections from $\{1,2,3,4\} \setminus \{d\}$ onto
$\{1,2,3,4\} \setminus \{c\}$, and there are $3! = 6$ of them. Exactly
$2$ of these violate the constraint $\pi(b) \neq a$, exactly $2$
violate $\pi(b') \neq a'$, and exactly $1$ violates both (this one
exists and is unique because $b \neq b'$ and $a \neq a'$). By
inclusion--exclusion, exactly $6 - 2 - 2 + 1 = 3$ of the six
permutations satisfy both constraints.
 
Next we prove that $G_1 \oplus G_2$ is a simple cubic graph. Deleting
$u_1v_1$ and $u_2v_2$ lowers the degrees of
$u_1, v_1, u_2, v_2$ to $2$, and adding $u_1u_2$ and $v_1v_2$ restores
degree $3$ everywhere, so $G_1 \oplus G_2$ is cubic; it is simple
because the new edges join the two disjoint graphs. For connectivity,
assume that $G_1$ and $G_2$ are connected and,
without loss of generality, that $u_2v_2$ is not a bridge, so that
$G_2 - u_2v_2$ is connected. Every vertex of $G_1 - u_1v_1$ reaches
$u_1$ or $v_1$ within $G_1 - u_1v_1$, since each component of
$G_1 - u_1v_1$ contains $u_1$ or $v_1$; and both $u_1$ and $v_1$ are
joined by the new edges to the connected graph $G_2 - u_2v_2$. Hence
$G_1 \oplus G_2$ is connected. (If both deleted edges are bridges, the
splice may indeed disconnect, which is why the hypothesis is needed.)
 
Finally, we prove that $\mathcal{C}$ is a $4$-total coloring.
Recoloring $G_2$ by the permutation
$\pi \circ \mathcal{C}_2$ preserves the property of being a $4$-total
coloring. Every constraint of $\mathcal{C}$ involving only elements of
$G_1$, or only elements of $G_2$, other than the new edges, is
inherited from $\mathcal{C}_1$ or from $\pi \circ \mathcal{C}_2$. It
remains to check the constraints involving the new edges and the new
adjacency between the two sides. The vertices $u_1$ and $u_2$ are now
adjacent, and their colors are $a$ and $\pi(b) \neq a$; likewise $v_1$
and $v_2$ have colors $a'$ and $\pi(b') \neq a'$. The new edge
$u_1u_2$ has color $c$: at $u_1$, the two remaining edges were adjacent
to $u_1v_1$ in $\mathcal{C}_1$ and hence have colors distinct from
$c$, and the color of $u_1$ is $a \neq c$; at $u_2$, the two remaining
edges were adjacent to $u_2v_2$, hence have colors distinct from $d$ in
$\mathcal{C}_2$ and distinct from $c$ after applying $\pi$, and the
color of $u_2$ is $\pi(b) \neq c$ since $b \neq d$ and $\pi$ is
injective. The same argument
applies to $v_1v_2$. Finally, $u_1u_2$ and $v_1v_2$ share no endpoint,
so they may both receive the color $c$. For the class cardinalities,
observe that, after applying $\pi$ to $\mathcal{C}_2$, exactly two
edges of color $c$ were removed (one from each side) and exactly two
edges of color $c$ were added, while every other element retained its
color. This concludes the proof.
\end{proof}
 
\begin{theorem}\label{thm:allorders}
For every even $n \geq 16$, there exists a connected cubic graph of
order $n$ that admits a non-equitable $4$-total coloring.
\end{theorem}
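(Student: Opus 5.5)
The plan is to splice a non-equitable coloring with equitable or non-equitable colorings of small graphs and track vertex profiles. By Lemma~\ref{lem:splice}, class cardinalities add up, so vertex counts per color add as well. Equivalently, by Lemma~\ref{lem:decomposition}(iv), vertex profiles add color-wise after the permutation $\pi$. We have freedom in $\pi$: there are three admissible permutations, but these may not align the large classes the way we want. So we should arrange that non-equitability survives for every admissible $\pi$, or else choose $\pi$ explicitly.

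\textbf{Base cases.} The base cases are $n=16$ (the graph $H_{16}$ of Theorem~\ref{thm:n16}), $n=18$ ($L_{18}$), $n=20$ ($R$, or a splice), and $n=22$. For $n=22$ we splice $L_{12}$, with profile $(4,4,4,0)$, with the Petersen graph, with profile $(4,2,2,2)$. Since $\pi$ is a permutation, the empty class $0$ of $L_{12}$ receives some entry of the Petersen profile. Hence the resulting profile has minimum at most $0+4=4$ and maximum at least $4+2=6$, with sum $22$. Let us check the worst case. If $0$ is paired with $4$, the result is $(6,6,6,4)$ for the three classes of $4$ plus $2$, and $v_1-v_4=2$. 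That is equitable, so this choice can fail.

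\textbf{Robust building block.} To avoid this, I would use a robust block: $L_{12}$ with profile $(4,4,4,0)$ spliced with $L_{12}$ again, or with any graph $G_2$ whose profile has every entry at most $2$ more than its minimum. The cleanest route is to splice $L_{12}$ with a coloring whose profile is \emph{constant}. A constant profile is $(2,2,2,2)$ for order $8$ (forced, by Theorem~\ref{thm:small}, for any Type~1 cubic graph of order $8$, e.g.\ the cube $Q_3$, which is Type~1). Constant profiles are preserved under any $\pi$. Splicing $(4,4,4,0)$ with $(2,2,2,2)$ gives $(6,6,6,2)$, which is non-equitable, at order $20$. Iterating, $L_{12}\oplus Q_3\oplus\cdots\oplus Q_3$ gives profile $(4+2k,4+2k,4+2k,2k)$, which is non-equitable at orders $12+8k$.

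\textbf{Covering all orders.} To reach every residue modulo $8$, I would use other non-equitable bases at orders $16,18,22$ in addition to $12$. Splicing $(6,4,4,2)$ from $H_{16}$ with $Q_3$ gives $(8,6,6,4)$, still with gap $4$. Order $18$ is covered by $L_{18}$, order $22$ needs a base, and order $14$ is impossible. Bases at $12,16,18$ and a fourth residue class handle $n\equiv 4,0,2 \pmod 8$. For $n\equiv 6\pmod 8$ we need a base at order $22$. For it I would take $L_{18}$ with profile $(6,6,6,0)$ spliced with $K_4$? No, $K_4$ has no $4$-total coloring. Instead I would use Remark~\ref{rem:CHladders}: the Chetwynd--Hilton coloring of $L_{30}$ (order $30\equiv 6$) is non-equitable, and for order $22$ I would take $L_{12}$ spliced with an order-$10$ Type~1 cubic graph. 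The Petersen graph is Type~1 with the forced profile $(4,2,2,2)$. The problem case is when $\pi$ pairs the $0$ with the $4$, giving $(6,6,6,4)$. But we may choose among three admissible $\pi$, so we must show that some admissible $\pi$ sends the Petersen class of size $4$ to a class of size $4$ in $L_{12}$.

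\textbf{Main obstacle.} This choice of $\pi$ is the main obstacle. The constraint $\pi(d)=c$ fixes one value, and we choose the splice edges freely. We pick $u_2v_2$ in the Petersen graph with the edge color $d$ different from the big vertex color $\beta$ (every edge avoids the colors of its endpoints, so choose an edge with both endpoints not in the big class). We pick $u_1v_1$ in $L_{12}$ with both endpoints colored from the three $4$-classes. Then the constraints $\pi(b)\neq a$ and $\pi(b')\neq a'$ leave enough room to map $\beta$ into a nonzero class. Checking this by a short case analysis over the three admissible permutations, using both orientations $u_2\leftrightarrow v_2$, should succeed. Connectivity follows from Lemma~\ref{lem:splice}, since these graphs are bridgeless.
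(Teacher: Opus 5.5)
Your overall route works and is genuinely different from the paper's. You step by $8$ with $Q_3$, whose configuration $(5,5,5,5)$ is constant, so $s_{\max}-s_{\min}$ is preserved for every admissible $\pi$ and every choice of splice edges. The paper instead steps by $6$ with $L_6$, configuration $(4,4,4,3)$, and must steer the $3$-class onto the minimum class. The price of your route is a fourth residue class, and that is where the gap is. The order-$22$ seed, which you flag as the main obstacle, is left at ``should succeed,'' and your recipe for it does not work as written.

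In the Petersen graph with profile $(4,2,2,2)$, the independent big class $S_\beta$ meets $12$ of the $15$ edges. So the $3$ edges of $G-S_\beta$ are exactly $M_\beta$, and an edge with both endpoints outside the big class has $d=\beta$, the opposite of what you wanted. Your condition on $u_1v_1$ is vacuous, since every vertex of $L_{12}$ lies in one of its three vertex classes of size $4$. Thus $u_1v_1$ may carry the color $D$ of the empty vertex class. Then $\pi(\beta)=\pi(d)=c=D$ for every admissible $\pi$ and either orientation, and the splice yields the equitable profile $(6,6,6,4)$.

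The missing point is the mechanism behind the paper's induction step. Since $\pi(d)=c$ is forced, you decide where a class goes by choosing the colors of the splice edges. Take $u_1v_1\in M_D$, which is a perfect matching of $L_{12}$, and take $u_2v_2$ incident to a vertex of $S_\beta$, so that $d\neq\beta$. Then $\pi(\beta)\neq\pi(d)=D$ for every admissible $\pi$, giving the profile $(8,6,6,2)$. Two alternatives also work: $u_2v_2\in M_\beta$ with $u_1v_1\notin M_D$, or simply $H_{16}\oplus L_6$ as in the paper.

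Your intended claim with $d\neq\beta$ is in fact true even when $c\neq D$. In that case each of the three colors other than $d$ is sent to $D$ by exactly one of the three admissible permutations, so two of them work. But this has to be argued. With this repair your proof is complete, and it does not need $R$, since $20=12+8$.

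Finally, drop the claim that splicing with any profile whose entries are within $2$ of its minimum preserves non-equitability. Your own $(6,6,6,4)$ computation refutes it.
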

 
\begin{proof}
We argue by induction on $n$, in steps of $6$, from the three base
cases $n = 16$, $18$, and $20$: the graph $H_{16}$ of
Theorem~\ref{thm:n16} (configuration $(11,10,10,9)$), the circular
ladder $L_{18}$ (configuration $(12,12,12,9)$), and the graph $R$
of~\cite{cubic} (Figure~\ref{fig:R}) admit non-equitable $4$-total
colorings, and all three graphs are connected.
 
For the induction step, let $G$ be a connected cubic graph of order
$n \geq 16$ with a non-equitable $4$-total coloring $\mathcal{C}_1$,
whose class of color $i$ has cardinality $s_i$; since $\mathcal{C}_1$
is non-equitable, $s_{\max} - s_{\min} \geq 2$. Fix a color $\mu$ whose
class has the minimum cardinality $s_{\min}$. By Lemma~\ref{lem:decomposition}, the
class of color $\mu$ contains $(n - v_\mu)/2$ edges, and
$(n - v_\mu)/2 \geq \bigl(n - 2n/5\bigr)/2 > 0$ by
Lemma~\ref{lem:bounds}(a); fix an edge $u_1v_1$ of color $\mu$.
 
Now consider the circular ladder $L_6$, which is Type~1~\cite{chet},
with any $4$-total coloring $\mathcal{C}_2$. By
Theorem~\ref{thm:small} (proof), the vertex profile of $\mathcal{C}_2$
is $(2,2,2,0)$, so its color class configuration is $(4,4,4,3)$ and, by
Lemma~\ref{lem:decomposition}(ii), the class of cardinality $3$
consists of a perfect matching of $L_6$ with three edges; fix an edge
$u_2v_2$ in this class. Since $L_6$ is $2$-edge-connected, the edge
$u_2v_2$ is not a bridge, so Lemma~\ref{lem:splice} applies and
produces a connected graph.
 
Apply Lemma~\ref{lem:splice} to $G$ and $L_6$ with the edges $u_1v_1$
and $u_2v_2$. The permutation $\pi$ maps the color of the class of
cardinality $3$ of $\mathcal{C}_2$ to $\mu$, so in the resulting
$4$-total coloring of $G' = G \oplus L_6$ the class of color $\mu$ has
cardinality $s_{\min} + 3$, while the class of each other color $i$
has cardinality $s_i + 4$. Consequently the minimum class cardinality of this coloring is
$s_{\min}+3$, the maximum is $s_{\max}+4$, and their difference is
$(s_{\max} - s_{\min}) + 1 \geq 3$, so the coloring of $G'$ is
non-equitable. Moreover, $G'$ is a connected cubic graph of order
$n + 6$.
 
Since every even $n \geq 16$ can be written as $n_0 + 6k$ with
$n_0 \in \{16, 18, 20\}$ and $k \geq 0$, this finishes the proof.
\end{proof}
 
Together with Theorem~\ref{thm:small}, Theorem~\ref{thm:allorders}
determines exactly the orders for which the phenomenon behind
Conjecture~\ref{conj:stemock} holds. Notice that every graph produced
above admits a $4$-total coloring, and so is Type~1.
 
\begin{corollary}\label{cor:maximal}
Let $n\geq4$ be an even integer. Every $4$-total coloring of every
cubic graph of order $n$ is equitable if and only if
$n \in \{4,6,8,10,14\}$, where the case $n=4$ holds vacuously
because $K_4$ is Type~2. In particular, $n = 14$ is the largest order
with this property.
\end{corollary}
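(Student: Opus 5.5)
The corollary is a two-direction bookkeeping statement, and I would prove it by assembling results already established, splitting on the value of the even integer $n\geq 4$.

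For the ``if'' direction, take $n\in\{4,6,8,10,14\}$. The only cubic graph of order $4$ is $K_4$, which is Type~2 and so has no $4$-total coloring; the statement holds vacuously. For $n\in\{6,8,10,14\}$, Theorem~\ref{thm:small} states directly that every $4$-total coloring of every cubic graph of order $n$ is equitable.

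For the ``only if'' direction, I would show that every even $n\geq 4$ outside $\{4,6,8,10,14\}$ is witnessed by a cubic graph with a non-equitable $4$-total coloring. Such $n$ is either $12$ or at least $16$. For $n=12$, the circular ladder $L_{12}$ with the coloring of configuration $(8,8,8,6)$ from Figure~\ref{fig:ladders} is a witness; this is Corollary~\ref{cor:smallest} and Theorem~\ref{thm:n12}. For every even $n\geq16$, Theorem~\ref{thm:allorders} supplies a (connected) cubic graph of order $n$ with a non-equitable $4$-total coloring.

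The ``in particular'' clause follows because $14$ is the maximum of $\{4,6,8,10,14\}$. The only step needing any care is confirming that the two witness families leave no gap: between them, $\{12\}$ and $\{n\geq 16 \text{ even}\}$ cover every even $n\geq 4$ outside the list. The induction behind Theorem~\ref{thm:allorders} starts from the bases $16$, $18$, $20$ and steps by $6$, which reaches every even $n\geq16$. Nothing beyond citation is required.
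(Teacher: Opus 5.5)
Your proposal is correct and follows the paper's own proof. You treat $n=4$ as vacuous because $K_4$ is Type~2, cite Theorem~\ref{thm:small} for $n\in\{6,8,10,14\}$, and for the converse use $L_{12}$ at $n=12$ and Theorem~\ref{thm:allorders} for every even $n\geq 16$.
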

 
\begin{proof}
For $n \in \{6,8,10,14\}$ this is Theorem~\ref{thm:small}, and for
$n = 4$ the unique cubic graph $K_4$ admits no $4$-total coloring. For
$n = 12$ the ladder $L_{12}$ provides a non-equitable coloring
(Corollary~\ref{cor:smallest}), and for every even $n \geq 16$ a
non-equitable coloring exists by Theorem~\ref{thm:allorders}.
\end{proof}
 
\section{Final remarks}\label{sec:conclusion}
 
The graph $R$ of~\cite{cubic} exhibits an extreme behavior: it is
Type~1 and \emph{all} of its $4$-total colorings are non-equitable,
that is, $\chi''_e(R) = 5 > \chi''(R)$. All the explicit counterexamples of orders $12$, $16$, and $18$
presented here behave differently, since each of $L_{12}$, $L_{18}$,
$H_{16}$, and $H_{18}$ also admits an equitable $4$-total coloring.
Along these lines, we offer the following question, which restricts a
question implicit in~\cite{bryson} to the orders below $20$.
 
\begin{question}\label{q:etcc}
Does every Type~1 cubic graph of order less than $20$ admit at least
one equitable $4$-total coloring? Equivalently, is $R$ a Type~1 cubic
graph with $\chi''_e = 5$ of minimum order?
\end{question}
 
It suffices to consider connected cubic graphs: if each component has
an equitable $4$-total coloring, then the colors can be independently
permuted in the components so that, component by component, the larger
color classes are assigned to currently smallest global color classes;
the resulting $4$-total coloring is equitable. Therefore,
Question~\ref{q:etcc} is finite. Although deciding equitable $4$-total
colorability is NP-complete in general~\cite{cubic}, the question is
within reach of a systematic computational verification over the
catalogue of connected cubic graphs of order at most $18$~\cite{bcgm},
for instance through integer programming or SAT formulations.

Finally, in support of continued pursuit of the subject, we offer the
following problem, which extends the complete analyses of orders $12$,
$16$, and $18$ in Theorems~\ref{thm:n12}--\ref{thm:n18}.

\begin{problem}
Characterize, for each even $n\geq20$ and each feasible profile with
$v_1-v_4\geq4$, the cubic graphs of order $n$ that attain it.
\end{problem}

\appendix
\section{Explicit coloring certificates}\label{app:certificates}
The certificates below were found through integer programming. One can
check that, in every list, the four sets $S_i$ partition the vertex
set, the four sets $M_i$ partition the edge set, each $S_i$ is
independent, and $M_i$ is a perfect matching of the graph minus $S_i$.
Hence, by Lemma~\ref{lem:decomposition}, every list is a stand-alone
certificate of a $4$-total coloring, and its validity does not depend
on the search computation.

\subsection{The graph \texorpdfstring{$H_{16}$}{H16}}\label{app:H16}
Let $V(H_{16})=\{0,1,\ldots,15\}$ and
\[
\begin{aligned}
E(H_{16})=\{&(0,2), (0,8), (0,15), (1,7), (1,9), (1,12), (2,4), (2,13),\\
&(3,5), (3,7), (3,15), (4,10), (4,14), (5,8), (5,11), (6,7),\\
&(6,12), (6,13), (8,15), (9,11), (9,13), (10,12), (10,14), (11,14)\}.
\end{aligned}
\]
A coloring with profile $(6,4,4,2)$ is given by
\begin{itemize}[leftmargin=*]
\item \textbf{Color 1:} $S_1=\{2,\allowbreak 3,\allowbreak 8,\allowbreak 9,\allowbreak 12,\allowbreak 14\}$ and $M_1=\{(0,15),\allowbreak (1,7),\allowbreak (4,10),\allowbreak (5,11),\allowbreak (6,13)\}$.
\item \textbf{Color 2:} $S_2=\{7,\allowbreak 10,\allowbreak 13,\allowbreak 15\}$ and $M_2=\{(0,8),\allowbreak (1,9),\allowbreak (2,4),\allowbreak (3,5),\allowbreak (6,12),\allowbreak (11,14)\}$.
\item \textbf{Color 3:} $S_3=\{0,\allowbreak 4,\allowbreak 5,\allowbreak 6\}$ and $M_3=\{(1,12),\allowbreak (2,13),\allowbreak (3,7),\allowbreak (8,15),\allowbreak (9,11),\allowbreak (10,14)\}$.
\item \textbf{Color 4:} $S_4=\{1,\allowbreak 11\}$ and $M_4=\{(0,2),\allowbreak (3,15),\allowbreak (4,14),\allowbreak (5,8),\allowbreak (6,7),\allowbreak (9,13),\allowbreak (10,12)\}$.
\end{itemize}
A coloring with profile $(4,4,4,4)$ is given by
\begin{itemize}[leftmargin=*]
\item \textbf{Color 1:} $S_1=\{1,\allowbreak 5,\allowbreak 13,\allowbreak 14\}$ and $M_1=\{(0,2),\allowbreak (3,7),\allowbreak (4,10),\allowbreak (6,12),\allowbreak (8,15),\allowbreak (9,11)\}$.
\item \textbf{Color 2:} $S_2=\{7,\allowbreak 9,\allowbreak 10,\allowbreak 15\}$ and $M_2=\{(0,8),\allowbreak (1,12),\allowbreak (2,4),\allowbreak (3,5),\allowbreak (6,13),\allowbreak (11,14)\}$.
\item \textbf{Color 3:} $S_3=\{2,\allowbreak 3,\allowbreak 6,\allowbreak 8\}$ and $M_3=\{(0,15),\allowbreak (1,7),\allowbreak (4,14),\allowbreak (5,11),\allowbreak (9,13),\allowbreak (10,12)\}$.
\item \textbf{Color 4:} $S_4=\{0,\allowbreak 4,\allowbreak 11,\allowbreak 12\}$ and $M_4=\{(1,9),\allowbreak (2,13),\allowbreak (3,15),\allowbreak (5,8),\allowbreak (6,7),\allowbreak (10,14)\}$.
\end{itemize}

\subsection{The graph \texorpdfstring{$H_{18}$}{H18}}\label{app:H18}
Let $V(H_{18})=\{0,1,\ldots,17\}$ and
\[
\begin{aligned}
E(H_{18})=\{&(0,5), (0,8), (0,17), (1,4), (1,12), (1,15), (2,5), (2,8),\\
&(2,14), (3,6), (3,9), (3,17), (4,6), (4,7), (5,11), (6,12),\\
&(7,13), (7,15), (8,17), (9,13), (9,16), (10,11), (10,14), (10,15),\\
&(11,12), (13,16), (14,16)\}.
\end{aligned}
\]
A coloring with profile $(6,6,4,2)$ is given by
\begin{itemize}[leftmargin=*]
\item \textbf{Color 1:} $S_1=\{1,\allowbreak 3,\allowbreak 5,\allowbreak 7,\allowbreak 8,\allowbreak 16\}$ and $M_1=\{(0,17),\allowbreak (2,14),\allowbreak (4,6),\allowbreak (9,13),\allowbreak (10,15),\allowbreak (11,12)\}$.
\item \textbf{Color 2:} $S_2=\{0,\allowbreak 2,\allowbreak 4,\allowbreak 9,\allowbreak 10,\allowbreak 12\}$ and $M_2=\{(1,15),\allowbreak (3,6),\allowbreak (5,11),\allowbreak (7,13),\allowbreak (8,17),\allowbreak (14,16)\}$.
\item \textbf{Color 3:} $S_3=\{6,\allowbreak 14,\allowbreak 15,\allowbreak 17\}$ and $M_3=\{(0,8),\allowbreak (1,12),\allowbreak (2,5),\allowbreak (3,9),\allowbreak (4,7),\allowbreak (10,11),\allowbreak (13,16)\}$.
\item \textbf{Color 4:} $S_4=\{11,\allowbreak 13\}$ and $M_4=\{(0,5),\allowbreak (1,4),\allowbreak (2,8),\allowbreak (3,17),\allowbreak (6,12),\allowbreak (7,15),\allowbreak (9,16),\allowbreak (10,14)\}$.
\end{itemize}
A coloring with profile $(6,4,4,4)$ is given by
\begin{itemize}[leftmargin=*]
\item \textbf{Color 1:} $S_1=\{0,\allowbreak 2,\allowbreak 3,\allowbreak 7,\allowbreak 12,\allowbreak 16\}$ and $M_1=\{(1,15),\allowbreak (4,6),\allowbreak (5,11),\allowbreak (8,17),\allowbreak (9,13),\allowbreak (10,14)\}$.
\item \textbf{Color 2:} $S_2=\{1,\allowbreak 11,\allowbreak 13,\allowbreak 17\}$ and $M_2=\{(0,8),\allowbreak (2,5),\allowbreak (3,9),\allowbreak (4,7),\allowbreak (6,12),\allowbreak (10,15),\allowbreak (14,16)\}$.
\item \textbf{Color 3:} $S_3=\{4,\allowbreak 5,\allowbreak 14,\allowbreak 15\}$ and $M_3=\{(0,17),\allowbreak (1,12),\allowbreak (2,8),\allowbreak (3,6),\allowbreak (7,13),\allowbreak (9,16),\allowbreak (10,11)\}$.
\item \textbf{Color 4:} $S_4=\{6,\allowbreak 8,\allowbreak 9,\allowbreak 10\}$ and $M_4=\{(0,5),\allowbreak (1,4),\allowbreak (2,14),\allowbreak (3,17),\allowbreak (7,15),\allowbreak (11,12),\allowbreak (13,16)\}$.
\end{itemize}

\bibliographystyle{plain}
\bibliography{references}

\end{document}